\newtheorem{theorem}{Theorem}
\newtheorem{remark}{Remark}
\newtheorem{example}{Example}
  \theoremstyle{definition}
  \theoremstyle{remark}
\title{Saddle-Node Bifurcation and Homoclinic Persistence in  AFM with Periodic Forcing}
\author{Alexander Gutierrez G.\footnote{Universidad Tecnológica de Pereira (UTP), alexguti@utp.edu.co}, Daniel Cortés Z.\footnote{Universidad Tecnológica de Pereira (UTP), danielcorteszapata@utp.edu.co}, Diego A. Castro G.\footnote{Universidad Tecnológica de Pereira (UTP), xandercastro@utp.edu.co}}
\date{}
\begin{document}

\maketitle

{\bf Abstract}
We study the dynamics of an Atomic Force Microscope (AFM) model, under the Lennard-Jones force with non-linear damping, and harmonic forcing. We establish the bifurcation diagrams for equilibria in a conservative system. Particularly, we present conditions that guarantee the local existence of saddle-node bifurcations. By using the Melnikov method, the region in the space parameters where the persistence of homoclinic orbits is determined in a non-conservative system.

{\bf Keywords:} Homoclinic Orbits,Bifurcation, Melnikov's function.

\section{Introduction}

The Atomic Force Microscopes (AFMs), were created  in 1986 by Bining, et. al, \cite {Quate}. They are based on the tunneling microscope and the needle profilometer principles. Generally, AFMs  measure the interactions between particles by allowing the nanoscale study of the surfaces for different materials, \cite{aplicacion4, aplicacion3, Morita}. In fact a wide variety of applications in analysis of pharmaceutical products, the study of the properties of fluids and  fluids in cellular detection, the medicine studies,  among others can be found in \cite{Bowen, Bru, aplicacion5, Alan}.

The  model is presented in \cite {Ashhab2, Ashhab}, where the authors study the interaction between the sample and the device's tip, see figure \ref{fig:AFM}. The associated differential equation is:

\begin{equation}\label{eq:nnn1}
     \ddot{y}+\frac{C}{(y+a)^3}\dot{y}+y =\frac{b_1}{(y+a)^8}-\frac{b_2}{(y+a)^2}+ f(t).
\end{equation}

where $b_1,b_2$ and $a$ are positive constants and $f$ is a continuous function $T$-periodic with zero average, that is, $\bar{f} = \frac{1}{T} \int_{0}^{T} f(t)\,dt = 0 $. The right hand side 

\[
F_{LJ}:=\frac{b_1}{(y+a)^8}-\frac{b_2}{(y+a)^2},
\]

is known as the Lennard-Jones force, which can be considered as a simple mathematical model to explain the interaction between a pair of neutral atoms or molecules; see \cite{Jensen,Lennard-Jones}  for the standard formulation. The first term describes the short-range repulsive force due to overlapping electron orbits so-called Pauli repulsion, whereas the second term simulates the long-range attraction due to van der Waals forces. This is a special case of the wider family of Mie forces
\[
F_{n,m}(x)=\frac{A}{x^n}-\frac{B}{x^m},
\]
where $n,m$ are positive integers with $n>m$, also known as the $n-m$ Lennard-Jones force, see \cite{Brush}. On the other hand, the dissipative term of \eqref{eq:nnn1}:
\[
F_r=\frac{C}{(y+a)^3}\dot{y},
\]
is associated with a damping force of  compression squeeze-film type. In specialized literature,  compression film type damping can be considered as the most common and dominant dissipation in different mechanisms, (see \cite{Mohammad, Amortiguamiento} and their bibliography).\\

\begin{figure}
    \centering
	\includegraphics[scale=0.5]{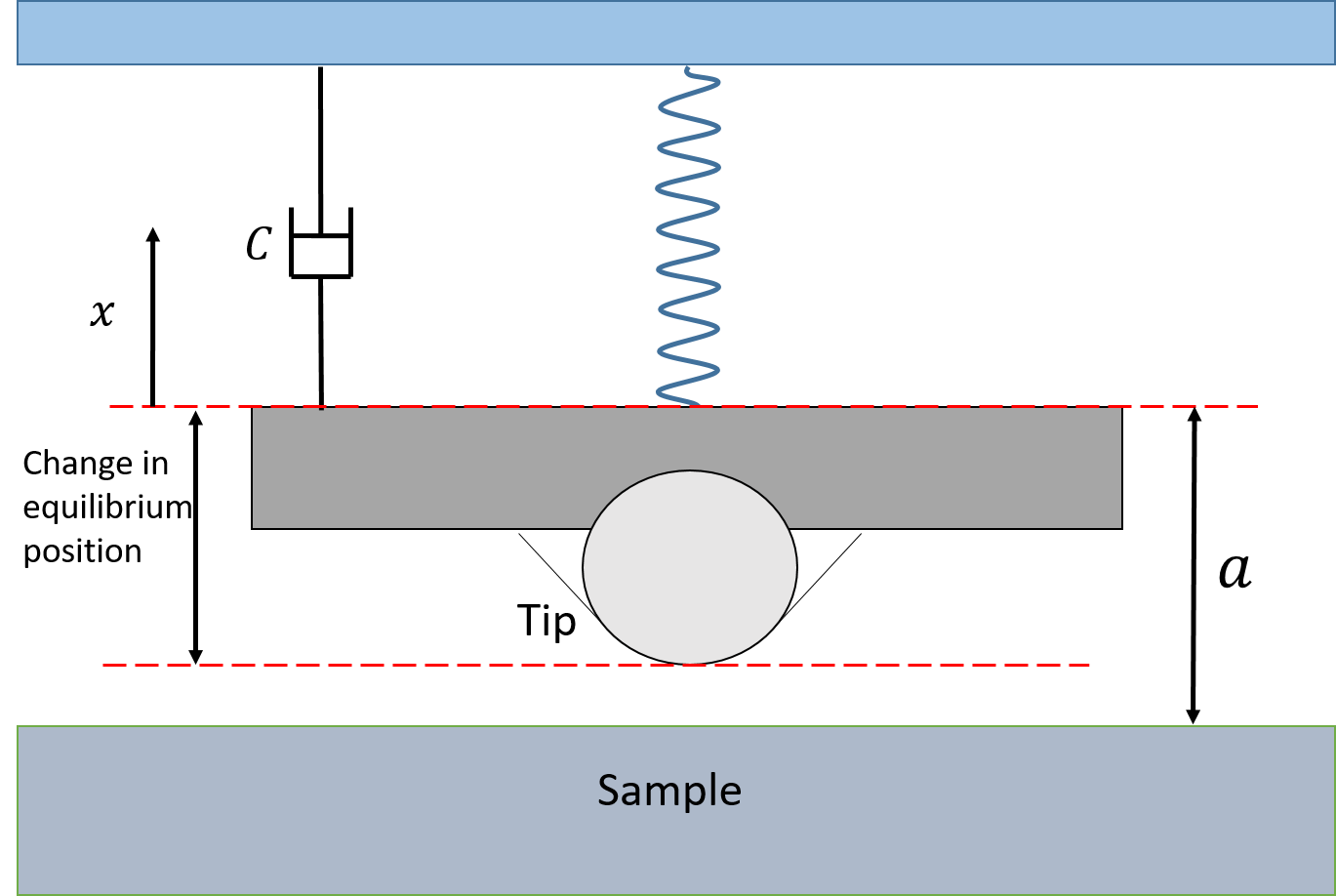}
	\caption{Mechanical model associated with the AFM's devices.}\label{fig:AFM}
\end{figure}

For the conservative system, two main results were obtained, Theorems \ref{teo:claseqv} and \ref{teo:bif}, where we  establish analytically the bifurcation diagram of the equilibria for specific regions with the involved  parameters  in contrast to the one obtained in  \cite{aplicacion7}. In particular, Theorem \ref{teo:bif} proves the local existence of two saddle-node bifurcations that can be related to hysteresis phenomenon, see for example \cite{hysteresis1, hysterisis2}.

 In the non-conservative system, we present as a  main result, Theorem \ref{teo:melnivok}, which gives a thorough and rigorous condition for the persistence of  homoclinic orbit  when the external forcing is of the form $ f(t)=B \cos (\Omega t)$. The condition found  relates the amplitude of the external forcing $B$ with the damping constant $C$, which in practice can be used to prevent the AFM device from becoming decalibrate.

This article is structured in the following way: this first section as an introduction, section two is dedicated to prove the main results in conservative system, and section three contains the proof for the main result of the non-conservative system along with some illustrative examples.

\section{Bifurcation Diagrams}

With the change of variable  $x=y+a$,  \eqref{eq:nnn1} is rewritten as 
\begin{equation}\label{eq:prinmod}
\ddot{x}=m(x)+a+\epsilon\bigg( f(t)-\frac{C}{x^3}\dot{x}  \bigg),
\end{equation}
where $m(x)=\frac{b_1}{x^8}-x-\frac{b_2}{x ^ 2}$ is the total force acting over the system, which is a combination of the Lennard-Jones force and the restoring force of the oscillator.  The change of the singularity from $-a $ to $0$ will facilitate the study of the bifurcation diagram for equilibria in the conservative system ($\epsilon=0$). Note that the classification of the equilibrium solutions of \eqref{eq:prinmod} plays an important role when the full equation is studied. We now  describe some properties of the function  $m(x)$: 
\begin{align*}
\lim_{x\to 0^+} m(x)&=\infty, & \lim_{x\to\infty} \dfrac{m(x)}{x}=-1,
\end{align*}
moreover $m$  has only one positive root and a direct analysis provides a critical value
\begin{equation}
b_1 ^* = \frac{4}{27}b_2^3 \label{b1},
\end{equation}
such that:
\begin{itemize}
	\item [i)]If $b_1> b_1^*$, then $m(x)$ is decreasing.
	\item [ii)] If $b_1=b_1^*$, then $m(x)$ is non-increasing and has an inflection point  in  $x_c=(\frac{4}{3} b_2)^{1/3}$.
	\item [iii)]  Finally, if  $b_1<b_1^*$, then  $m(x)$ has a local maximum (resp. minimum)   in $x_r$ (resp.$x_l$)  and $m(x_r),m(x_l)<0$.
\end{itemize}
Therefore, the equilibria set  $\mathcal{G}=\{x\in\mathbb{R}^+:\,m(x)+a=0\}$ is finite, not empty, and the number of equilibria depends on the parameter $a$. Figure \ref{fig:funcioneme}, shows the possible variants of the $m$ function in terms of $b_1$, $b_2$ and $a$.\\

\begin{figure}
	\subfigure[$m$ is decreasing monotone if  $b_1> b_1^*$ ]{\includegraphics[scale=0.4]{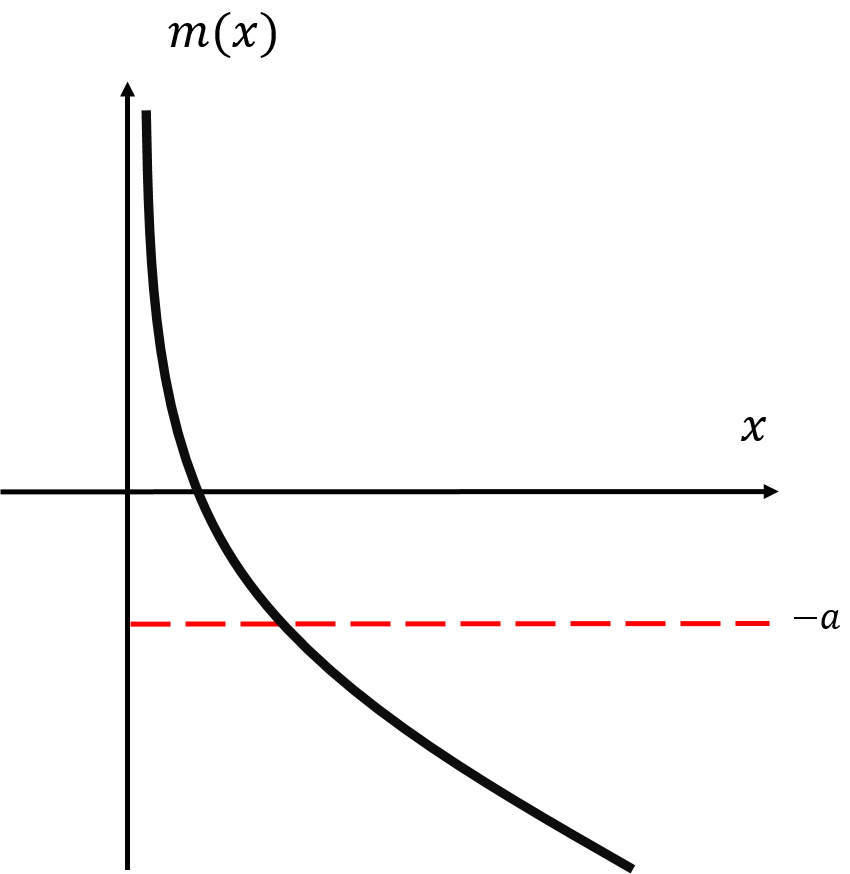}}\subfigure[ $m$  has a maximum and a local minimum, if $b_1< b_1^*$. ]{\includegraphics[scale=0.4]{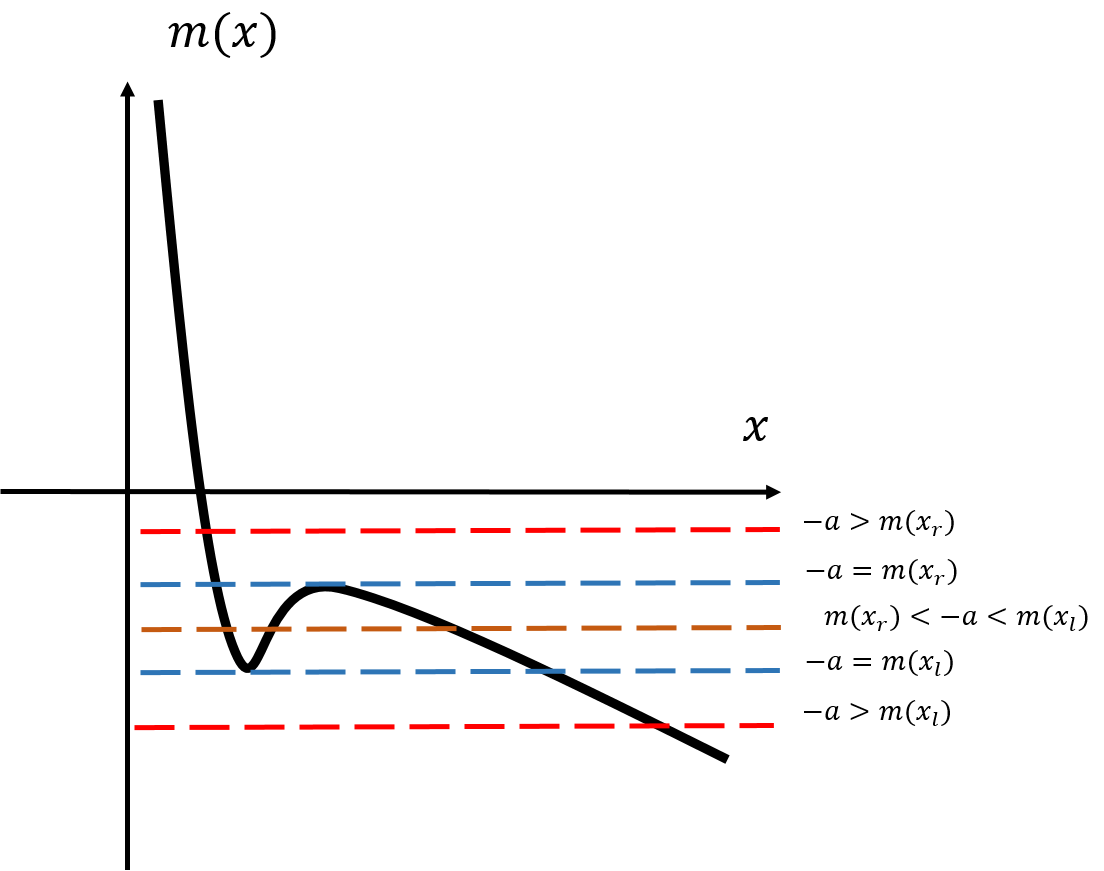}}
\caption{ The $m$ function in terms of parameters $b_1$, $b_2$.}\label{fig:funcioneme}
\end{figure}

The proof of Theorem \ref{teo:claseqv}  will be made by establishing the equilibria for system \eqref{eq:prinmod}. Let us define the energy function:

\begin{equation}
\label{eq:energia}
E(x,v):= \frac{v^2}{2}+\frac{x^2}{2}+\frac{1}{7}\dfrac{b_1}{x^7}-\frac{b_2}{x}-ax.
\end{equation} 

Note that the local minimums of $E$ correspond to non-linear centers and the local maximums correspond to saddles. However, when $ E $ has a degenerate critical point $(x^*,0)$, since the Hessian matrix $A$ is such that $\text{Tr} A = 0$, $\text{Det } A = 0$, but  $A\neq 0$. In this case, \cite{Andronov} shows, that the system  can be writen in "normal" form:
\begin{equation}
    \begin{aligned}
         \dot{x}=& y\\
         \dot{y}=& a_k x^k [1+h(x)]+b_n x^n y[1+g(x)]+y^2 R(x,y),
    \end{aligned} \label{normal}
\end{equation}
where $h(x),\,g(x)$ and $R(x,y)$ are analytic in a neighborhood of the equilibrium point $h(x^*)=g(x^*)=0$,  $k\geq 2$, $a_k \neq 0$ and $n\geq 1$. Thus the degenerate critical point $(x^*,0)$ is either a focus, a center a node, a (topological) saddle, saddle-node, a cup or a critical point with an elliptic domain, see \cite[Theorem 2, pp 151, Theorem 3, pp 151]{Perko}.

 
\begin{theorem}\label{teo:claseqv}
The equilibrium solutions of the conservative system associated with \eqref{eq:prinmod} are classified as follows:

\begin{enumerate}
\item A non-linear center if either $b_1\geq b_1^* $ and $a\in \mathbb{R}^+$ or $b_1<b_1^*$ and $a\in \{\mathbb{R}^+-]-m(x_r),-m(x_l)[\}$. 

\item Two non-linear centers and a saddle if $b_1<b_1^*$ and $a\in]-m(x_r),-m(x_l)[.$

\item A non-linear center and a cusp, if either  $b_1<b_1^*$ and $a=-m(x_r)$ or $a=-m(x_l)$.
\end{enumerate}
\end{theorem}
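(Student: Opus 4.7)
The plan is to identify equilibria of the conservative system ($\epsilon=0$) as the zeros of $m(x)+a$ and classify each one via the linearization of the phase-plane system and, at degenerate points, via the normal form \eqref{normal} together with the energy function $E$ in \eqref{eq:energia}. The count of equilibria in each case follows directly from the properties (i)--(iii) of $m$ recalled before the theorem. In cases (i) and (ii), $m$ runs monotonically from $+\infty$ to $-\infty$, so $m(x)+a=0$ has exactly one root. In case (iii) with $b_1<b_1^*$, $m$ has a local minimum at $x_l$ and a local maximum at $x_r$ with negative critical values, and the horizontal line $y=-a$ meets the graph of $m$ once if $a\in\mathbb{R}^+\setminus[-m(x_r),-m(x_l)]$, three times if $a\in(-m(x_r),-m(x_l))$, and at a tangency plus one simple crossing if $a=-m(x_r)$ or $a=-m(x_l)$; this yields the equilibrium counts claimed in items (1)--(3).

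For the classification, I write the system as $\dot x=v$, $\dot v=m(x)+a$. The Jacobian at an equilibrium $(x^*,0)$ has eigenvalues $\pm\sqrt{m'(x^*)}$. At a simple root with $m'(x^*)<0$, $E$ has a strict local minimum at $(x^*,0)$, so the level sets of $E$ near $(x^*,0)$ are closed curves and Lagrange--Dirichlet stability gives a non-linear center. At a simple root with $m'(x^*)>0$, the equilibrium is hyperbolic with real eigenvalues of opposite sign, hence a topological saddle. Reading the sign of $m'$ from the monotonicity of $m$ on each subinterval of $\mathbb{R}^+$ gives: in item (1) the unique root lies where $m$ is strictly decreasing and is a non-linear center; in item (2) the two outer roots in $(0,x_l)$ and $(x_r,\infty)$ are non-linear centers, while the middle root in $(x_l,x_r)$ is a saddle.

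Item (1) also contains the borderline situation $b_1=b_1^*$, in which the unique root may coincide with the inflection point $x_c$; there $m'(x_c)=m''(x_c)=0$ but $m'''(x_c)\ne 0$, so $E$ still has a (fourth-order) strict local minimum at $(x_c,0)$ and the equilibrium is a degenerate non-linear center. For item (3), at the double root $x^*\in\{x_l,x_r\}$ one has $m'(x^*)=0$ while $m''(x^*)\ne 0$ (since the extrema of $m$ are non-degenerate, as a short computation from $m(x)=b_1/x^8-x-b_2/x^2$ confirms). Taylor-expanding gives $m(x)+a=\tfrac12 m''(x^*)(x-x^*)^2+O((x-x^*)^3)$, so after translating $x^*$ to the origin the conservative system already sits in the normal form \eqref{normal} with $k=2$, $a_2=\tfrac12 m''(x^*)\ne 0$, and with $b_n\equiv 0$, $R\equiv 0$ because there is no damping. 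The theorems of Perko cited above then classify this point as a cusp, while the remaining simple root (in $(0,x_l)$ or $(x_r,\infty)$ respectively) is a non-linear center by the previous paragraph.

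The main obstacle I expect is precisely the degenerate classification at $x_l$ and $x_r$: one must verify that these extrema are non-degenerate (so that the normal-form index is $k=2$ rather than $k\ge 3$) and then invoke the Andronov--Perko dichotomy to rule out a saddle-node or focus and conclude a cusp. Here this step is tractable because the Hamiltonian structure automatically forces the vanishing of the $y$-linear term $b_n x^n y[1+g(x)]$ in \eqref{normal}, leaving only the purely $x$-dependent leading term on which the cusp criterion depends.
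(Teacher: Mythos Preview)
Your proof is correct and follows essentially the same line as the paper's: count the equilibria from the shape of $m$, classify the nondegenerate ones via the energy $E$ (local minima give centers, local maxima give saddles), and treat the degenerate ones through the normal form \eqref{normal} and Perko's theorems. Your explicit check that $m''(x_l),m''(x_r)\ne 0$ for $b_1<b_1^*$ (so that $k=2$ in \eqref{normal}) is exactly the point the paper uses implicitly when it sets $k=2$ and invokes \cite[Theorem~3]{Perko} for the cusp.

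The one genuine difference is the borderline case $b_1=b_1^*$, $a=-m(x_c)$. The paper computes the third-order coefficient $a_3=m'''(x_c)/6<0$ in the normal form \eqref{normal} and then appeals to \cite[Theorem~2]{Perko} to conclude a center. You instead observe that $V^{(4)}(x_c)=-m'''(x_c)>0$ (with $V$ the potential in $E$), so $E$ has a strict quartic local minimum at $(x_c,0)$, and the Lagrange--Dirichlet argument gives a center directly. Both are valid; your route is a little more self-contained for this Hamiltonian setting, while the paper's keeps the degenerate cases in a uniform normal-form framework.
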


\begin{proof} We present here the main steps $1.-3.$ of the argument.\\
	
1. Note that $\mathcal{G}$  has a unique element if either $b_1> b_1^*$ and $a\in \mathbb{R}^+$ or $b_1<b_1^*$ and $a\in\mathbb{R}^+-]-m(x_r),-m(x_l)[$, the equilibrium is a non-linear center since  $E$ reaches a local minimum at that point. For the case $b_1=b_1^*$,  $a=-m(x_c)$  is degenerate, using the expansion given in \eqref{normal}, we have  $k=3$ and 
\[
a_k=\frac{24\,b_2}{6 \,x_c ^5}-\frac{720\,b_1^*}{6\, x_c ^{11}}<0,
\]
 therefore,from \cite[Theorem 2, pp 151]{Perko}, follows that the equilibrium is a non-linear center.\\

2. Under the hypothesis made, the set $\mathcal{G}$ has three solutions such that two are local minimums of $E$ and the other is a local maximum of $E$. Consequently, two of the equilibria  are non-linear centers and the other equilibrium is a saddle.\\

3. In this case, $\mathcal{G} $ has two solutions such that one of them is a local minimum of $E$ and corresponds to a non-linear center while the other one is degenerate with  $k=2$, $b_1=0$  in \eqref{normal}. Consequently, \cite[Theorem 3, pp 151]{Perko} guarantees that equilibrium is a cusp.
\end{proof}

In the next section,we focus on the persistence of homoclinic orbits present in  Theorem \ref{teo:claseqv} when studying the equation  \eqref{eq:prinmod}.

The conservative equation associated with \eqref{eq:prinmod} can be written as the parametric system:

\begin{equation}
\label{eq:bif}
\begin{aligned}
 	x'=&y\\
 	y'=& F(x,a),
 \end{aligned}
 \end{equation}
where $F(x,a)=m(x)+a$. Note that Theorem \ref{teo:claseqv} allows us to  build the bifurcation diagram of equilibria in terms of the parameter $a$, see figures \ref{fig:funcioneme} and \ref{fig:bif}. Moreover, when  $b_1\geq b_1^*$ the parameter $a$ does not modify the dynamics of the system  as it does when $b_1< b_1^* $. In fact, there exists  numerical evidence, see \cite{Ashhab, Mohammad}, which shows that the points  $(x_i, a_i)$,  with $a_ {i}= -m(x_{i})$, $i=r,s$ are bifurcation points. In the following theorem, it will be formally shown that those points  are saddle-node bifurcation points.

\begin{theorem}
	\label{teo:bif}
If $b_1 <b_1^* $ then the points $(x_i, a_i)$, $i=r,l$ are local saddle-node bifurcation for the conservative system  \eqref{eq:prinmod}.
\end{theorem}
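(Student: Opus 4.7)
The plan is to recast the claim as a one-dimensional fold (saddle-node) bifurcation for the scalar equilibrium equation $G(x,a):=m(x)+a=0$, and then read off the local saddle-plus-center phase portrait directly from the Jacobian of \eqref{eq:bif}. Since $\partial G/\partial a\equiv 1$, the Hamiltonian structure turns the reduction into a costless application of the implicit function theorem, so I expect the whole argument to be a short verification of four derivative conditions at each of the two candidate points.

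At $(x_r,a_r)$ with $a_r=-m(x_r)$, I would record: $G(x_r,a_r)=0$; $G_x(x_r,a_r)=m'(x_r)=0$ because $x_r$ is a critical point of $m$; the transversality $G_a(x_r,a_r)=1\neq 0$; and the non-degeneracy $G_{xx}(x_r,a_r)=m''(x_r)<0$. This last inequality is exactly the content of the strict hypothesis $b_1<b_1^*$: at the boundary $b_1=b_1^*$ the extremum degenerates into an inflection and we fall into Theorem~\ref{teo:claseqv}(3). Applying the implicit function theorem with $a$ as the dependent variable, the equilibrium set near $(x_r,a_r)$ is the graph $a=-m(x)$, attaining a strict local minimum at $x_r$ of value $a_r$. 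Counting roots, there are no equilibria near $x_r$ for $a<a_r$, a unique degenerate one (the cusp of Theorem~\ref{teo:claseqv}(3)) at $a=a_r$, and a pair $x_1(a)<x_r<x_2(a)$ for $a>a_r$ that coalesce as $a\to a_r^+$. To classify them I would use the Jacobian $\bigl(\begin{smallmatrix}0&1\\ m'(x^*)&0\end{smallmatrix}\bigr)$ with spectrum $\pm\sqrt{m'(x^*)}$: because $x_r$ is a strict local maximum of $m$, $m'(x_1(a))>0$ yields a hyperbolic saddle while $m'(x_2(a))<0$ yields a linear center, promoted to a nonlinear center by conservativity. This is the standard saddle-node picture. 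The analysis at $(x_l,a_l)$ is identical up to reflection, with $m''(x_l)>0$ so the pair of equilibria now appears on the opposite side $a<a_l$ and disappears for $a>a_l$.

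I expect the main obstacle to be conceptual rather than computational: at the bifurcation parameter the linearization of the planar system is the nilpotent Jordan block $\bigl(\begin{smallmatrix}0&1\\0&0\end{smallmatrix}\bigr)$, whose zero eigenvalue has algebraic multiplicity two and geometric multiplicity one. Sotomayor's saddle-node criterion in Perko's formulation requires a simple zero eigenvalue and therefore does not apply off the shelf in $\mathbb{R}^2$. The Hamiltonian reduction to the scalar fold equation $m(x)+a=0$ is precisely what makes the transversality condition automatic and bypasses the need for any center-manifold computation; an alternative route, which I would only fall back on if the reduction were questioned, is to expand the energy \eqref{eq:energia} around $(x_r,0)$ and directly recognise the one-parameter unfolding of an $A_2$ (cusp) singularity, leading to the same conclusion.
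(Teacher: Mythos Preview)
Your proposal is correct and follows essentially the same route as the paper: reduce to the scalar equilibrium equation $F(x,a)=m(x)+a=0$ and verify the fold conditions $F_{xx}(x_i,a_i)=m''(x_i)\neq 0$ and $F_a(x_i,a_i)=1\neq 0$, invoking that $x_r$ (resp.\ $x_l$) is a strict local maximum (resp.\ minimum) of $m$ under $b_1<b_1^*$. The paper simply cites Kuznetsov for the conclusion, whereas you unpack it via the implicit function theorem, count and classify the emerging equilibria through the Jacobian, and add the useful remark that the planar linearization at the bifurcation point is the nilpotent block $\bigl(\begin{smallmatrix}0&1\\0&0\end{smallmatrix}\bigr)$, so Sotomayor's simple-zero-eigenvalue hypothesis is not met directly and the scalar reduction is what makes the argument clean.
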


\begin{proof}
 In fact, it is enough that the following conditions are fulfilled, as shown  in \cite[Theorem 3.1, pp 84]{Yuri}:
\begin{itemize}
	\item[A1] $\partial_{xx}F(x,a)|_{(x_i,a_i)}\neq 0$.
	\item[A2] $\partial_a F(x,a)|_{(x_i,a_i)}\neq 0$.
\end{itemize}	
Indeed,  we have  $\partial_{xx}F(x,a)|_{(x_l,a_l)}>0$ ( resp. $\partial_{xx}F(x,a)|_{(x_r,a_r)}<0$), because $m$ has relative minimum (resp.  maximum) in $x_l$ (resp. $x_r$) and  $\partial_a F(x,a)|_{(x_i,a_i)}=1$.
\end{proof}

To summarize, the results obtained in theorems \ref{teo:claseqv} and \ref{teo:bif} are illustrated in the bifurcation diagram of the conservative system associated to \eqref{eq:prinmod}. In part a) of Figure  \ref{fig:bif} the red curve separates the region in terms of the parameters $ b_1 $ and $ b_2 $ for which the conservative system has a unique equilibrium (independent of the parameter $a$), of the region where the number of equilibrium solutions depends on the parameter $a$. In fact, if we take $(b_2, b_1)\in\mathbb{R} ^ 2 _ + - \{(b_2, b_1)\in\mathbb{R}^2_+:b_1 \geq b_1^*\}$ then the conservative system may have one, two or three equilibria as illustrated in Figure \ref{fig:bif} (b). In this figure the solid lines are related to the stable equilibria, while the dotted line is related to the solutions of unstable equilibria. Furthermore it can be shown that locally around the points $(x_i, a_i)$, $ i=l,r$ there is a saddle-node bifurcation.

 \begin{figure}
 	\subfigure[Bifurcation Diagram in terms of the parameters $b_1$, $b_2$]{\includegraphics[scale=0.4]{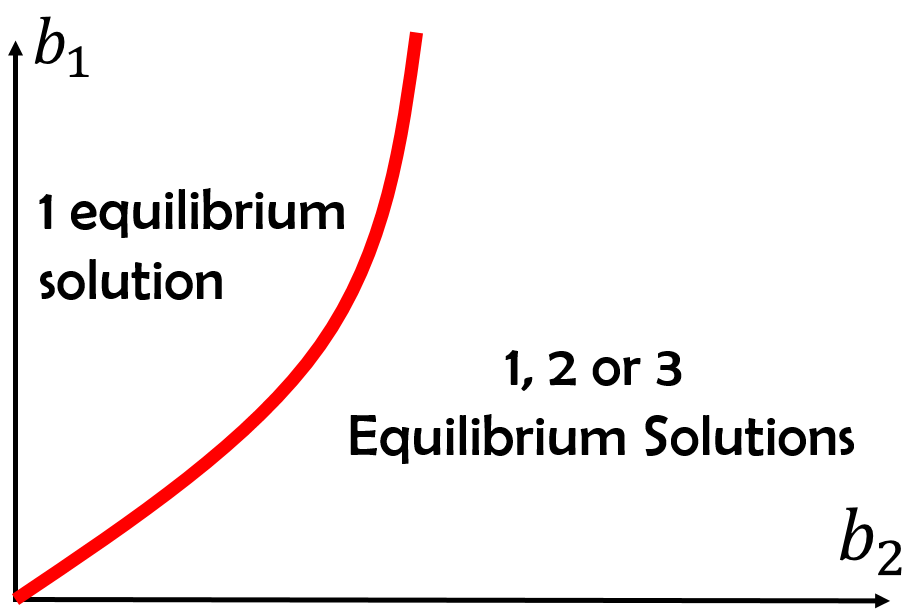}} \subfigure[Bifurcation diagram in terms of the parameter $a$ and the number of equilibrium solutions when setting $b_1$ and $b_2$ such that $b_1<b_1^*$]{\includegraphics[scale=0.4]{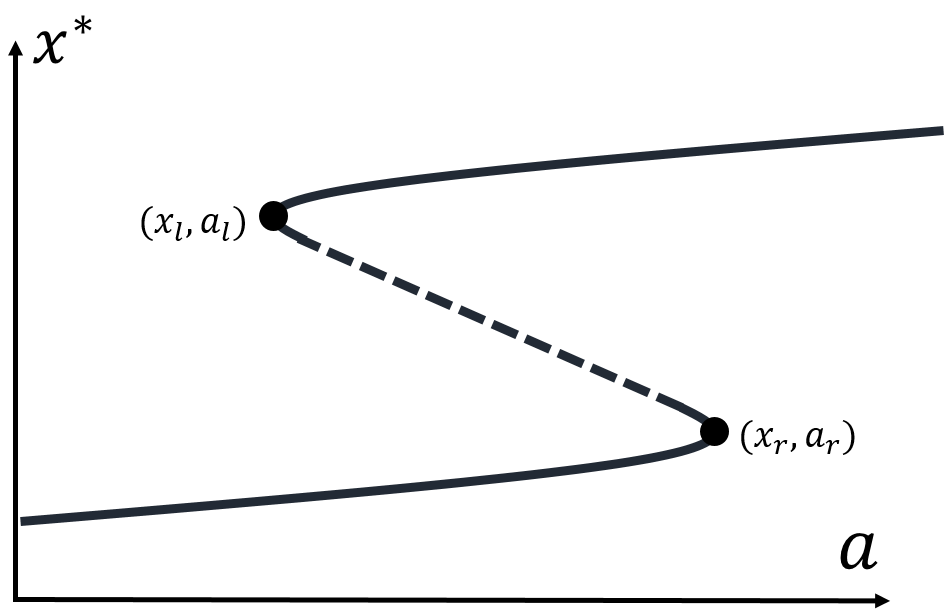}}
 	\caption{Bifurcation Diagrams of the equation \eqref{eq:prinmod} in conservative system.}\label{fig:bif}
 \end{figure}

\section{Homoclinic Persistence}

The discussion in this section is limited to the case $b_1<b_1 ^*$ and $a\in]-m(x_r), -m(x_l)[$. The objective  is to apply the Melnikov's method to \eqref{eq:prinmod} when  $f(t)=B\cos(\Omega \,t)$, it can be used to described  how the homoclinic orbits persists in the presence
of the perturbation. For AFM models the persistence of homoclinic orbits has great practical use since it can be produce uncontrollable vibrations of the device, causing fail  and  generate erroneous readings, \cite{Ashhab2, Ashhab, Amortiguamiento}. 

Before we address this problem, let us establish some notation. Consider the systems of the form

\begin{equation}
\label{eq:nearH}
x'=f(x)+\epsilon g(x,t),\quad x\in\mathbb{R}^2,
\end{equation}

where $f$ is a vector field Hamiltonian  in $\mathbb{R}^2$, $g_i\in C^{\infty}(\mathbb{R}^2\times \mathbb{R}/(T \mathbb{Z}))$, $i=1,2$, $g=(g_1,g_2)^T$ and $\epsilon\geq 0$. Now, suppose in an unperturbed system, i.e $\epsilon=0$ in \eqref{eq:nearH}, the existence of a family of periodic orbits given by 
\[
\gamma_e=\{	(x_1,x_2): E(x_1,x_2)=e\}, \quad e\in]\alpha,\beta[,
\]  	

such that $\gamma_e$ approaches a center as $e\to\alpha$ and  to an invariant curve denoted by $\gamma_\beta$, as $e\to\beta$. When $\gamma_\beta$ is bounded, it is a homoclinic loop consisting of a saddle and a connection. We want to  know if $\gamma_\beta$ persists when \eqref{eq:nearH}, where $0<\epsilon<<1 $, that is, if $\gamma_\beta(t,\epsilon)$ is a homoclinic of \eqref{eq:nearH} that is generated by $\gamma_\beta$. The first approximation of $\gamma_e (t, \epsilon) $ is given by the zeros of the Melnikov's function $M_e(t)$ defined as:
\[
\label{eq:melikov}
M_e(t):=\int_{E(x_1,x_2)=e} g_2 dx_1-g_1 dx_2,
\]	 
therefore, it is necessary to know  the number of zeros of \eqref{eq:melikov}. For our purposes, the following Theorem, which is an adaptation of \cite{Han}, will be useful.
\begin{theorem}[\cite{Han}, Theorem 6.4] \label{teo:han} Suppose $e_0\in]\alpha,\beta]$ and $t_0\in\mathbb{R}$.
\begin{enumerate}
	\item If $M_{e_0}(t_0)\neq 0$, then, there are no limit cycles near $\gamma_{e_0}$ for $\epsilon+|t_0+t|$ sufficiently small.
	
	\item 	If  $M_{e_0}(t)= 0$ is a simple zero there is exactly one limit cycle $\gamma_{{e_0}}(t_0,\epsilon)$ for $\epsilon+|t_0+t|$ sufficiently small that approaches $\gamma_{{e_0}}$ when $(t,\epsilon)\to (t_0,0)$.	 
\end{enumerate}
\end{theorem}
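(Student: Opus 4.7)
The plan is to prove this by the classical Melnikov displacement argument. First I would fix a smooth transversal section $\Sigma$ to the unperturbed orbit $\gamma_{e_0}$, parametrize points of $\Sigma$ by the unperturbed energy $e$, and regard \eqref{eq:nearH} as an autonomous system on $\mathbb{R}^2 \times \mathbb{R}/(T\mathbb{Z})$. A limit cycle of the perturbed system near $\gamma_{e_0}$ must intersect $\Sigma$ at a point where the time-$T$ return map is the identity, so I would introduce the displacement (or energy-increment) function $\Delta(e,t_0,\epsilon) = E(\varphi_T^\epsilon(e,t_0)) - e$, where $\varphi_T^\epsilon$ is the flow of \eqref{eq:nearH} starting at time $t_0$ on the section $\Sigma$.

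Next I would expand $\Delta$ in $\epsilon$. Differentiating along a perturbed trajectory, $\dot E = \nabla E \cdot (f+\epsilon g) = \epsilon\,\nabla E\cdot g$, because $f$ is Hamiltonian with Hamiltonian $E$. Integrating over the period $T$ and replacing the perturbed trajectory by the unperturbed orbit $\gamma_e$ (the difference is $O(\epsilon)$ uniformly on compacta by Gronwall), I obtain
\[
\Delta(e,t_0,\epsilon) \;=\; \epsilon \int_0^{T_e} \nabla E\bigl(\gamma_e(s)\bigr)\cdot g\bigl(\gamma_e(s),s+t_0\bigr)\,ds \;+\; O(\epsilon^2),
\]
and a direct rewriting using $\nabla E = (-f_2, f_1)$ along $\gamma_e$ turns the leading term into the line integral $\epsilon\, M_e(t_0)$ defined in \eqref{eq:melikov}. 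For part 1, continuity of $M_e(t_0)$ and the hypothesis $M_{e_0}(t_0)\neq 0$ give a neighborhood on which $\Delta/\epsilon$ stays bounded away from zero for small $\epsilon$, ruling out fixed points of the return map, hence limit cycles. For part 2, I would define $\tilde\Delta(e,t_0,\epsilon)=\Delta/\epsilon$ for $\epsilon\neq 0$ and $\tilde\Delta(e,t_0,0)=M_e(t_0)$, which is $C^1$; since $M_{e_0}$ has a simple zero at $t_0$, the partial derivative $\partial_{t_0}\tilde\Delta(e_0,t_0,0)\neq 0$, so the implicit function theorem yields a unique smooth branch $t_0(e,\epsilon)$ of zeros, i.e.\ exactly one limit cycle near $\gamma_{e_0}$ that collapses onto it as $\epsilon\to 0$.

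The main obstacle is the boundary case $e_0=\beta$, where $\gamma_\beta$ is the homoclinic loop, because then the unperturbed period $T_e$ blows up and the Gronwall estimate used to justify the $O(\epsilon^2)$ remainder degenerates. To handle this I would split the orbit into (a) a long but finite arc away from the saddle, where the standard variation-of-parameters estimate applies, and (b) two short passes through a small neighborhood of the saddle, where I use the stable/unstable manifold theorem and the hyperbolic linearization (with eigenvalues $\pm\lambda$) to bound the transit time logarithmically and to control how perturbed trajectories shadow the stable and unstable separatrices. The Melnikov integral $M_\beta(t_0)=\int_{-\infty}^{\infty}(g_2 dx_1 - g_1 dx_2)$ converges because $g$ is bounded and $\nabla E\cdot g$ decays exponentially along $\gamma_\beta$, and the separatrix splitting is $\epsilon M_\beta(t_0) + O(\epsilon^2)$ uniformly in $t_0$, which again allows an implicit function theorem argument to produce (or rule out) a homoclinic/limit cycle of the perturbed system. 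This uniform control near the saddle is precisely the delicate step carried out in Han's reference.
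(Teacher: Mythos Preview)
The paper does not prove this theorem at all: it is quoted verbatim as an adaptation of \cite[Theorem~6.4]{Han} and then used as a black box (the only proofs in the paper are of Theorems~\ref{teo:claseqv}, \ref{teo:bif}, and \ref{teo:melnivok}, the last of which simply \emph{invokes} Theorem~\ref{teo:han}). So there is no ``paper's own proof'' to compare against.

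That said, your sketch is the standard Melnikov displacement argument and is essentially the route taken in Han's reference and in the classical texts \cite{Guckenheimer,Perko}: compute the first variation of the energy along the unperturbed orbit to obtain $\Delta(e,t_0,\epsilon)=\epsilon\,M_e(t_0)+O(\epsilon^2)$, deduce part~1 from sign persistence and part~2 from the implicit function theorem applied to $\Delta/\epsilon$. You also correctly isolate the genuine technical difficulty, namely the boundary case $e_0=\beta$ where the period diverges and the naive Gronwall bound fails; the cure you indicate (splitting the orbit into a finite arc plus two hyperbolic passages near the saddle, with exponential decay of $\nabla E\cdot g$ ensuring convergence of $M_\beta$) is exactly the mechanism behind the uniform estimates in \cite{Han}. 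One point to tighten in an actual write-up: in the periodic case the return map you want is the time-$T$ stroboscopic map (with $T$ the forcing period), whereas your displacement integral is written over the unperturbed period $T_e$; for non-resonant $e$ these do not match, and the usual resolution is either to restrict to subharmonic resonances or, as in \cite{Han}, to work with the averaged/first-return displacement on a cross-section and track how $M_e$ enters there.
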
   

\begin{remark}
Melnikov's function can be interpreted as the first approximation in $\epsilon$ of the distance between the stable and unstable manifold, measured along the direction perpendicular to the unperturbed connection, that is, $d(\epsilon):= \epsilon \frac{M_{\beta(t_0)}}{\|f(\gamma_\beta)\|}+O(\epsilon^2)$.
 In particular, when $ M_\beta(t_0)> 0 $ (resp. $ <0 $) the unstable manifold is above (resp. below) the stable manifold, see \cite{Guckenheimer, Perko} for a detail discussion.
\end{remark}

Rewriting \eqref{eq:prinmod} as a system of the form \eqref{eq:nearH}, we obtain
\begin{align*}
f(x_1,x_2)& =\begin{pmatrix}
x_2 \\ m(x_1)+a
\end{pmatrix}, & g(x_1,x_2,t) &=\begin{pmatrix}
0 \\ B \cos (\Omega \,t)-\dfrac{C}{x_1^3}x_2 
\end{pmatrix}.
\end{align*} 

From Theorem \ref{teo:claseqv}, we have that if $b_1<b_{1}^* $ and $a\in]-m(x_r),-m(x_l)[$, the unperturbed system has  three equilibria from which one is a  saddle,  denoted by $ (x_{sa}, 0) $. The  function's energy associated with the conservative system is given by \eqref{eq:energia} and homoclinic loops, denoted by $\Gamma_l$ and $\Gamma_r$, and  $E(x_1, x_2)=E(x_{sa},0)=\beta$.

When calculating Melnikov's function along the separatrix on the right $\Gamma_r$, the computation along $\Gamma_l$ is identical, this is:

\begin{align*}
M_\beta(t_0)=&\int_{\Gamma_r} g_2 dx_1 -g_1dx_2=\oint_{\gamma_{\beta_r}} (E_{x_2}g_1+E_{x_1}g_2)dt\\
=& \int_{-\infty}^{\infty} x_2(t)\left(B\, \text{cos}(\Omega (t+ t_0)
)-\frac{C}{x_1^3(t)}x_2(t)\right) dt\\
=& B\,\text{cos}(\Omega\, t_0)\int_{-\infty}^{\infty}\text{cos}(\Omega\, t)x_2(t)dt-B\,\text{sen}(\Omega\, t_0)\int_{-\infty}^{\infty}\text{sen}(\Omega\, t)x_2(t)dt \\
&- C\int_{-\infty}^{\infty}\frac{x_2^2(t)}{x_1^3(t)}dt\\
=&-2B\,\text{sen}(\Omega\, t_0)\int_{0}^{\infty}\text{sen}(\Omega\, t)x_2(t)dt - C\int_{-\infty}^{\infty}\frac{x_2^2(t)}{x_1^3(t)}dt.
\end{align*}
Note that
\[
 \int_{-\infty}^{\infty}\text{cos}(\Omega\, t)x_2(t)dt =0,
\]
due to  $\cos(\Omega \, t)x_2 (t)$ is an odd function. Consequently:
\[
M_\beta(t_0)=-2B\,\text{sen}(\Omega\, t_0)\int_{0}^{\infty}\text{sen}(\Omega\, t)x_2(t)dt - C\int_{-\infty}^{\infty}\frac{x_2^2(t)}{x_1^3(t)}dt.
\]
Define
\begin{align*}
    \xi_1&=-2\int_{0}^{\infty}\text{sen}(\Omega\, t)x_2(t)dt, &
	\xi_2&=-\int_{-\infty}^{\infty}\frac{x_2^2(t)}{x_1^3(t)}dt,  
\end{align*}
and we proof that $\xi_1$, $\xi_2$  are bounded. Indeed, $dt=dx_1/x_1 =dx_1 /x_2$ and $x_{sa}<x_1 <\bar{x}$ in $ \Gamma_r $, where $x_{sa},\,\bar{x}$ are consecutive zeros  of $E(x_1,0)-\beta$. Now if $ E(x_1, x_2)=\beta$ then 
\begin{equation*}
x_2^2=2\left(\beta+ax_1+\frac{b_2}{x_1}-\frac{b_1}{7\,x_1^7}-\frac{x_1^2}{2}\right),
\end{equation*} 
hence
\begin{align*}
\xi_1\leq  2\int_{0}^{\infty} x_2(t)dt=2\int_{x_{sa}}^{\bar{x}}dx_1=2(\bar{x}-x_{sa}).
\end{align*}
On the other hand, 
\begin{align*}
|\xi_2|\leq 2 C\int_{x_{sa}}^{\bar{x}} \left|\frac{x_2}{x_1^3}\right|dx_1=2C \int_{x_{sa}}^{\bar{x}}\frac{\sqrt{2\left(\beta+ax_1+\frac{b_2}{x_1}-\frac{b_1}{7\,x_1^7}-\frac{x_1^2}{2}\right)}}{|x_1^3|}dx_1 <\infty.
\end{align*}
Finally Melnikov's function is rewritten as
\begin{equation}
\label{eq:probmel}
M_\beta(t_0)=B\, \xi_1 \text{ sen}(\Omega\,t_0)+C\,\xi_2.
\end{equation}

\begin{theorem}\label{teo:melnivok}
Under the conditions of item 2 of the Theorem \ref{teo:claseqv} we have that the homoclinic orbits of \eqref{eq:prinmod} persist as long as $\epsilon$ is sufficiently small and:

\begin{equation}
\label{eq:cond1}
\frac{B}{C}>\bigg|\frac{\xi_2}{\xi_1}\bigg|.    
\end{equation}	
\end{theorem}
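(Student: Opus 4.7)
The plan is to invoke Theorem \ref{teo:han} (or equivalently the Melnikov distance interpretation in the preceding remark) applied to the homoclinic level $e_0=\beta$. Since the calculation leading up to \eqref{eq:probmel} already delivers the Melnikov function in the explicit form $M_\beta(t_0)=B\,\xi_1\sin(\Omega t_0)+C\,\xi_2$, everything reduces to a trigonometric analysis of when this function has a simple zero.

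First I would isolate the zero condition as $\sin(\Omega t_0)=-C\xi_2/(B\xi_1)$, which makes sense because $\xi_1\neq 0$ (the integrand $\sin(\Omega t)x_2(t)$ does not vanish identically on the half-orbit of $\Gamma_r$). Under the hypothesis \eqref{eq:cond1}, one has $|C\xi_2/(B\xi_1)|<1$, so there is $t_0^\ast\in\mathbb{R}$ solving this equation. Next I would check simplicity by computing
\[
M_\beta'(t_0)=B\,\xi_1\,\Omega\cos(\Omega t_0),
\]
and observing that at the candidate zero,
\[
\cos^2(\Omega t_0^\ast)=1-\sin^2(\Omega t_0^\ast)=1-\left(\frac{C\xi_2}{B\xi_1}\right)^{2}>0,
\]
so $M_\beta'(t_0^\ast)\neq 0$; the zero is simple.

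With a simple zero at hand, I would apply item 2 of Theorem \ref{teo:han} at $e_0=\beta$: up to a factor $\|f(\gamma_\beta)\|^{-1}$, the function $M_\beta$ is the first-order expansion of the splitting distance between the stable and unstable manifolds of the saddle $(x_{sa},0)$, so a simple zero forces a transverse intersection for all sufficiently small $\epsilon>0$, producing a persistent (transverse) homoclinic orbit of \eqref{eq:prinmod} near $\Gamma_r$. Finally, I would remark that the analogous computation on the left homoclinic loop $\Gamma_l$ yields a Melnikov function of the same structure $B\,\tilde\xi_1\sin(\Omega t_0)+C\,\tilde\xi_2$ with analogously bounded, nonzero coefficients, so the same inequality \eqref{eq:cond1}, possibly after enlarging the ratio, guarantees persistence of the homoclinic connection at $\Gamma_l$ as well.

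The only nontrivial points are checking that $\xi_1\neq 0$ (which follows because on $\Gamma_r$ the orbit has a single turning point $\bar x$ and the sign of $\sin(\Omega t)x_2(t)$ is not symmetric around it in a way that would force cancellation; a rigorous justification uses that $x_2(t)$ decays exponentially to $0$ as $|t|\to\infty$ along the homoclinic and the integrand is absolutely integrable), and citing the right version of Melnikov persistence — for a planar time-periodic perturbation of a Hamiltonian system, simple zeros of $M_\beta$ give transverse intersections of the perturbed invariant manifolds, which is precisely the content of Theorem \ref{teo:han}(2) interpreted via the remark. No other obstacles should appear, since the boundedness of $\xi_1,\xi_2$ was already verified above the statement.
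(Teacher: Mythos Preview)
Your proposal is correct and follows essentially the same approach as the paper: the paper's own proof simply asserts that condition \eqref{eq:cond1} forces \eqref{eq:probmel} to have a simple zero and then invokes Theorem \ref{teo:han}. You have merely spelled out the trigonometric details (solving $\sin(\Omega t_0)=-C\xi_2/(B\xi_1)$ and checking $M_\beta'(t_0^\ast)\neq 0$) and added the remark on $\xi_1\neq 0$, which the paper leaves implicit.
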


\begin{proof}

 Condition \eqref{eq:cond1} implies that  Melinikov's function \eqref{eq:probmel}  has a simple zero. Consequently, Theorem \ref{teo:han}  reaches the desired conclusion.

\end{proof}

\begin{example}
For illustrative purposes, we have taken from \cite{Rutzel} the  realistic values of the physical parameters in Table \ref{tabla1}.
\begin{table}[h]
\centering
\begin{tabular}{|c l|}
     \hline
     Symbol & Value\\
     \hline
     $A_1$ &  $0.001 X 10^-{70}$ $J m^6$  \\
     $A_2$ &  $2.96 X 10^-{19}$ $J$ \\
     $R$ &  $10$ $nm$ \\
     $K$ &  $0.87$ $N/m$\\
     $Z_0$ & $1.68108$ $nm$\\
     \hline
\end{tabular}
\caption{Properties of the case study of the AFM cantilever of Rützel et. al. \cite{Rutzel}}
\label{tabla1}
\end{table}
The values in Table \ref{tabla1} are related to the following adimensionalized values  $b_1, b_2 $ and $a$:

\begin{align*}
b_1 &=113876/10000000, & b_2 &=148148/1000000, & a&=1.07468,\\
|\xi_1| &=0.290315, & |\xi_2| &=0.382056.
\end{align*}
For instance, fix $C=1$ and $\Omega=1$, Theorem \ref{teo:melnivok} guarantees that if $ B> 1.316$ then the homoclinic persists.
\end{example}

\subsection*{Acknowledgments}
We are grateful to anonymous referees for their useful and inspiring remarks. The authors have been financially supported by the Convocatoria Interna UTP 2016, project CIE 3-17-4.

\subsection*{Data Availability}
The  data used to support the findings of this study are included within the article.

\end{document}